  \newcommand{\miniscule}{\@setfontsize\miniscule{3}{7}}
    \newcommand{\stiny}{\@setfontsize\miniscule{5}{7}}
  \newcommand{\miniscule}{\@setfontsize\miniscule{3}{7}}
   \newcommand{\stiny}{\@setfontsize\miniscule{5}{7}}
  \newcommand{\miniscule}{\@setfontsize\miniscule{3}{7}}
    \newcommand{\stiny}{\@setfontsize\miniscule{5}{7}}
\theoremstyle{definition}
\theoremstyle{remark}
\theoremstyle{corollary}
\theoremstyle{theorem}
\theoremstyle{corollary}
\newtheorem{theorem}{Theorem}[section]
\newtheorem{lemma}[theorem]{Lemma}
\theoremstyle{corollary}
\theoremstyle{definition}
\theoremstyle{remark}
\newtheorem{remark}[theorem]{Remark}
\numberwithin{equation}{section}
\newcommand{\C}{\mathbb{C}}
\newcommand{\Z}{\mathbb{Z}}
\newcommand{\Q}{\mathbb{Q}}
\def\P{\mathbb{P}}
\def\Pic{\operatorname{Pic}}
\def\Proj{\operatorname{Proj}}
\def\Cox{\operatorname{Cox}}
\def\reg{\operatorname{reg}}
\def\Eff{\operatorname{Eff}}
\def\Mov{\operatorname{Mov}}
\def\Bl{\operatorname{Bl}}
\def\deg{\operatorname{deg}}
\newcommand{\suchthat}{\;\ifnum\currentgrouptype=16 \middle\fi|\;}
\title[Log Fano structures and Cox rings]
{Log Fano structures and Cox rings of \\blow-ups of products of projective spaces}
\begin{document}

\author{John Lesieutre}
\address{Department of Mathematics, University of Illinois at Chicago, 851 South Morgan St.,
Chicago, IL 60607, USA}
\email{jdl@uic.edu}

\author{Jinhyung Park}
\address{School of Mathematics, Korea Institute for Advanced Study, 85 Hoegiro, Dongdaemun-gu, Seoul 02455, Republic of Korea}
\email{parkjh13@kias.re.kr}

\subjclass[2010]{Primary 14J45, Secondary 14E30, 14C20}

\date{\today}


\keywords{log Fano variety, Cox ring, Mori dream space, blow-up of a product of projective spaces}

\begin{abstract}
The aim of this paper is twofold. Firstly, we determine which blow-ups of products of projective spaces at general points are varieties of Fano type, and give boundary divisors making these spaces log Fano pairs. Secondly, we describe generators of the Cox rings of some cases.
\end{abstract}

\maketitle


\section{Introduction}

For integers $a,b,c$ such that $a,c \geq 2, b \geq 1$ and if $c=2$ then $a>2$, let
$$
X_{a,b,c}:=\Bl_{b+c}(\P^{c-1})^{a-1}
$$ 
be the blow-up of $(\P^{c-1})^{a-1}$ at $b+c$ general points of $(\P^{c-1})^{a-1}$. Many authors have studied these spaces in a variety of flavors (see e.g., \cite{AM}, \cite{BDP}, \cite{CDBDGP}, \cite{CT}, \cite{DO}, \cite{M1}, \cite{M2}, \cite{M3}, \cite{SV}). Finite generation of the Cox rings of these spaces is closely related to Nagata's construction for Hilbert's 14th problem (see \cite{M1}, \cite{M2}, \cite{CT}). 
The \emph{Cox ring} of a $\Q$-factorial normal projective variety $X$ with a finitely generated Picard group is defined as
$$
\Cox(X) := \bigoplus_{L \in \Pic(X)} H^0(X, L).
$$
Note that $\Cox(X)$ is finitely generated if and only if $X$ is a Mori dream space (\cite[Proposition 2.9]{HK}).
For more details on Cox rings, see \cite{HK}.
Castravet and Tevelev finally proved that $\Cox(X_{a,b,c})$ is finitely generated if and only if
$\frac{1}{a} + \frac{1}{b} + \frac{1}{c} > 1$ (\cite[Theorem 1.3]{CT}).

It is well known that $X_{2,b,3} = \Bl_{b+3} \P^2$ with $1 \leq b \leq 5$ is a del Pezzo surface, and $X_{2,b,4}=\Bl_{b+4} \P^3$ with $1 \leq b \leq 3$ is a weak Fano $3$-fold. In other words, $-K_{X_{2,b,3}}$ is ample and $-K_{X_{2,b,4}}$ is nef and big. Thus it is natural to ask which positivity property is satisfied by the  anticanonical divisor $-K_{X_{a,b,c}}$ of $X_{a,b,c}$ with $\frac{1}{a} + \frac{1}{b} + \frac{1}{c} > 1$. Note that $-K_{X_{a,b,c}}$ is not nef in general.
For example, consider \(X_{2,b,c} = \Bl_{b+c} \P^{c-1}\) with \(c \geq 5\). For the strict transform $C$ of a line between two of the blown-up points, we have \(-K_{X_{2,b,c}}.C = 4-c<0\).

The Cox ring of a $\Q$-factorial variety of Fano type is finitely generated (\cite[Corollary 1.3.2]{BCHM}).
A variety $X$ is said to be \emph{of Fano type} if there is a boundary divisor $\Delta$ on $X$ such that $(X, \Delta)$ is a \emph{log Fano pair}, i.e., it is a klt pair and $-(K_X+\Delta)$ is ample.
For the basics of singularities of pairs, we refer to \cite{KM}.
If $\frac{1}{a} + \frac{1}{b} + \frac{1}{c} \leq 1$, then the Cox ring of $X_{a,b,c}$ is not finitely generated so that $X_{a,b,c}$ is not of Fano type. For the converse, it was asked by Totaro in \cite[Example 8.6]{T} whether $X_{a,b,c}$ with $\frac{1}{a} + \frac{1}{b} + \frac{1}{c} > 1$ is a variety of Fano type. When $a=2$, this question was affirmatively answered in \cite[Theorem 1.3]{AM}.

The following is the first main result of this paper, which answers the above questions.

\begin{theorem}\label{main}
Assume that $\frac{1}{a} + \frac{1}{b} + \frac{1}{c} > 1$. Then we have the following:\\
\indent$(1)$ $-K_{X_{a,b,c}}$ is movable and big.\\
\indent$(2)$ $X_{a,b,c}$ is of Fano type.
\end{theorem}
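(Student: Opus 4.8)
The plan is to derive $(2)$ from $(1)$ by general principles and to concentrate the real work on $(1)$. Suppose $(1)$ holds. By \cite{CT} the variety $X_{a,b,c}$ is a Mori dream space, so by \cite{HK} its movable cone $\overline{\Mov}(X_{a,b,c})$ is rational polyhedral and equals the union of the nef cones of the finitely many small $\Q$-factorial modifications (SQMs) of $X_{a,b,c}$. Since $-K_{X_{a,b,c}}$ is movable, there is an SQM $\varphi\colon X_{a,b,c}\dashrightarrow X'$ with $-K_{X'}=\varphi_*(-K_{X_{a,b,c}})$ nef, and $-K_{X'}$ is again big because $\varphi$ is an isomorphism in codimension one. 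Hence $X'$ is a weak Fano variety which, being an SQM of a smooth variety, is $\Q$-factorial and klt. A Kodaira-type perturbation now shows $X'$ is of Fano type: write $-K_{X'}\sim_{\Q}A+E$ with $A$ ample and $E\ge 0$; then for $m\gg 0$ the pair $(X',\tfrac1m E)$ is klt and $-(K_{X'}+\tfrac1m E)\sim_{\Q}\tfrac{m-1}{m}(-K_{X'})+\tfrac1m A$ is ample. Since being of Fano type is invariant under SQMs, $X_{a,b,c}$ is of Fano type as well; transporting $\tfrac1m E$ back through $\varphi$ and perturbing produces an explicit boundary on $X_{a,b,c}$, as promised in the abstract.

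For $(1)$, set $N=(a-1)(c-1)=\dim X_{a,b,c}$ and let $H_1,\dots,H_{a-1}$ be the pullbacks of the hyperplane classes of the factors and $E_1,\dots,E_{b+c}$ the exceptional divisors, so that $-K_{X_{a,b,c}}=c\sum_{j=1}^{a-1}H_j-(N-1)\sum_{i=1}^{b+c}E_i$. I would prove the single, stronger statement that $-K_{X_{a,b,c}}$ lies in the interior of $\overline{\Mov}(X_{a,b,c})$. This yields both assertions simultaneously: it is then movable, and since $\overline{\Mov}(X_{a,b,c})\subseteq\overline{\Eff}(X_{a,b,c})$ are full-dimensional cones, an interior point of the smaller is an interior point of the larger and hence big.

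To place $-K_{X_{a,b,c}}$ inside the movable cone I would exploit the Weyl group symmetry behind \cite{CT}. The N\'eron--Severi space carries an action of the Coxeter group $W=W(T_{a,b,c})$ attached to the $T$-shaped diagram with arms of lengths $a-1$, $b-1$, $c-1$, and the hypothesis $\frac1a+\frac1b+\frac1c>1$ is precisely the condition that $W$ be finite. In the resulting Mukai-type picture $W$ permutes the nef cones of the SQMs, and $\overline{\Mov}(X_{a,b,c})$ is covered by the $W$-translates of $\overline{\Nef}(X_{a,b,c})$; the interior (flopping) walls are cut out by strict transforms of lines through pairs of the blown-up points, while the boundary walls correspond to divisorial contractions such as the blow-downs of the $E_i$ and their $W$-transforms. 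I would then show that $-K_{X_{a,b,c}}$ is a nonnegative combination of $W$-translates of the nef generators --- each such translate being nef on some SQM, hence movable --- which lands it in the interior of $\overline{\Mov}(X_{a,b,c})$. As supporting evidence that $-K_{X_{a,b,c}}$ meets the boundary walls strictly, note that for a line $f_i$ in the exceptional divisor $E_i$ one computes $-K_{X_{a,b,c}}\cdot f_i=N-1>0$.

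The principal obstacle is the explicit description of $\overline{\Mov}(X_{a,b,c})$ together with the verification that $-K_{X_{a,b,c}}$ pairs nonnegatively with every extremal movable curve class, done \emph{uniformly} over the whole family: since $X_{a,1,2}=\Bl_{3}(\P^{1})^{a-1}$ satisfies the hypothesis for every $a$, a finite case analysis is not available. The finiteness of $W$ is exactly what makes the set of relevant curve classes finite and the movable cone rational polyhedral, but the delicate point is that the Coxeter bilinear form defining $W$ is not the curve--divisor intersection pairing once $\dim X_{a,b,c}>2$, so the $W$-symmetry must be converted by hand into the numbers $-K_{X_{a,b,c}}\cdot\gamma$. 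I would organize the extremal flopping curves as strict transforms of the lines spanned by pairs of points, compute these intersection numbers directly, and reduce each required inequality to $\frac1a+\frac1b+\frac1c>1$; the single-factor case $a=2$ is available from \cite{AM}, and an induction on the number $a-1$ of factors, guided by the $W$-symmetry, should propagate the statement to all $a$.
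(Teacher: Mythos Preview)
Your reduction of $(2)$ to $(1)$ contains a genuine gap. You assert that the SQM $X'$ on which $-K_{X'}$ becomes nef is ``$\Q$-factorial and klt'' because it is an SQM of a smooth variety, but small $\Q$-factorial modifications need not preserve klt singularities: the passage $X\dashrightarrow X'$ may involve $K$-antiflips, which can worsen discrepancies. The paper flags precisely this obstruction, citing \cite[Example~5.1]{CG} for a Mori dream space with $-K$ big and movable that is nevertheless not of Fano type, and remarking that the end product of a $-K$-MMP ``may have bad singularities'' so that one must separately control them as in \cite{CP}. You do neither. The paper's proof of $(2)$ avoids the issue entirely by a different route: it shows $\Cox(X_{a,b,c})=\Cox(Z)$ where $Z=\Proj\Cox(X_{a,b,c})$, invokes \cite[Theorem~1.3]{CT} to see that $Z$ is Gorenstein Fano with rational (hence canonical) singularities, applies \cite[Theorem~1.1]{GOST} to deduce that $\Cox(Z)$ is log terminal, and then applies \cite{GOST} again in the other direction to conclude that $X_{a,b,c}$ is of Fano type.

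For $(1)$ you have identified the right tool---the Weyl group $W=W_{a,b,c}$---but you miss the short argument and instead propose explicit intersection computations and an induction on $a$, which you yourself flag as the ``principal obstacle.'' None of that is needed. The paper first shows that $W$ fixes $\Mov(X_{a,b,c})$: by \cite[Lemma~2.3]{CT} the action permutes the generators $G_1,\dots,G_r$ of $\Eff(X_{a,b,c})\cap\Pic(X_{a,b,c})$, hence permutes the cones $\Cone(G_1,\dots,\widehat{G_i},\dots,G_r)$, and therefore fixes their intersection, which is $\Mov(X_{a,b,c})$ by \cite[Proposition~7.2]{BH}. Then the sum of the generators of $\Mov(X_{a,b,c})\cap\Pic(X_{a,b,c})$ is $W$-invariant; since the $W$-fixed line in $\Pic(X_{a,b,c})\otimes\Q$ is exactly $\Q\cdot(-K_{X_{a,b,c}})$, this sum is a positive multiple of $-K_{X_{a,b,c}}$, which is therefore movable. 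The identical reasoning with $\Eff$ in place of $\Mov$ gives bigness. Your concern that the Coxeter form differs from the curve--divisor pairing in higher dimension is a red herring: no intersection numbers enter, only the uniqueness of the $W$-fixed line.
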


For the movability of $-K_{X_{a,b,c}}$, we prove that the movable cone $\Mov(X_{a,b,c})$ is fixed by the Weyl group action on $\Pic(X_{a,b,c})$. 
It is worth noting that the fact that \(-K_X\) is big and movable is not itself enough to guarantee that \(X\) is of Fano type even if \(X\) is a Mori dream space: an example is provided by~\cite[Example 5.1]{CG}.  
If we run a $-K_X$-minimal model program, then any end product may have bad singularities. Thus one also has to control potential singularities on a $-K_X$-miniaml model as in \cite{CP}.

Instead of using results of \cite{CP}, we use geometric facts about $\Proj \Cox(X_{a,b,c})$ in \cite[Theorem 1.3]{CT} and a characterization of varieties of Fano type via singularities of Cox rings in \cite[Theorem 1.1]{GOST} to prove the stronger conclusion, Theorem \ref{main} (2).
In Remark \ref{lfs}, we explain a simple way to choose a boundary divisor $\Delta_{a,b,c}$ on $X_{a,b,c}$ with $\frac{1}{a} + \frac{1}{b} + \frac{1}{c} > 1$ such that $(X_{a,b,c}, \Delta_{a,b,c})$ is a log Fano pair.
Our methods and the choice of boundary divisors are different from those of Araujo and Massarenti in \cite{AM}. They use Mukai's result (\cite{M3}) about the nef chamber decomposition of the movable cone and the description of the Mori cone.

Next we investigate generators of the Cox ring of $X_{a,b,c}$ with $\frac{1}{a} + \frac{1}{b} + \frac{1}{c} > 1$.
It was shown by Mukai (\cite{M2}) that $X_{a,b,c}$ is a small $\Q$-factorial modification of $X_{c,b,a}$, so that the Cox rings of $X_{a,b,c}$ and $X_{c,b,a}$ coincide.
Thus we can assume that $a \leq c$. Then $X_{a,b,c}$ is isomorphic to one of the following:\\
\smallskip
\indent$(1)$ $X_{2,b,3} = \Bl_{b+3} \P^2$ with $1 \leq b \leq 5$\\
\indent$(2)$ $X_{2,2,c} = \Bl_{c+2} \P^{c-1}$ with $c \geq 2$\\
\indent$(3)$ $X_{a,1,c}=\Bl_{c+1}(\P^{c-1})^{a-1}$ with $a \geq c \geq 2$ and $(a,c) \neq (2,2)$\\
\indent$(4)$  $X_{2,3,4}=\Bl_7 \P^3$, $X_{2,3,5}=\Bl_8 \P^4$, $X_{3,2,3}=\Bl_5(\P^2)^2$, $X_{3,2,4}=\Bl_6(\P^3)^2$ $X_{3,2,5}=\Bl_7(\P^4)^2$\\
\smallskip
The generators of Cox rings of varieties from (1), (2), (3) are already determined. For the case (1), $X_{2,b,3}$ is a del Pezzo surface, and the Cox rings are generated by distingushied global sections (see \cite[Theorem 3.2]{BP}). For the case (2), \cite[Theorem 1.2]{CT} explicitely describes the generators. For the case (3), we first recall that $X_{a,1,c}=\Bl_{c+1}(\P^{c-1})^{a-1}$ is the GIT quotient of the Grassmannian $Gr(a, a+c)$. Thus $\Cox(X_{a,1,c})$ is isomorphic to the homogeneous coordinate ring of the Pl\"{u}cker embedding of $Gr(a, a+c)$ (see \cite[Remark 3.9]{CT}). 
The only remaining case is (4). In this paper, we determine generators of $\Cox(X_{2,3,4})$ and $\Cox(X_{3,2,3})$ as follows.

\begin{theorem}\label{cox}
The Cox rings of $X_{2,3,4}=\Bl_7 \P^3$ and $X_{3,2,3}=\Bl_5 (\P^2)^2$ are generated by global sections of effective divisors of anticanonical degree $1$.
\end{theorem}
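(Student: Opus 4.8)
The plan is to prove, by induction on the anticanonical degree, that the subring $R \subseteq \Cox(X)$ generated by $\bigoplus_{\deg D = 1} H^0(X, D)$ is all of $\Cox(X)$, where $X$ denotes either $X_{2,3,4}$ or $X_{3,2,3}$ and $\deg$ is the anticanonical-degree grading. The preliminary step is combinatorial bookkeeping with the root systems: $X_{2,3,4}$ and $X_{3,2,3}$ carry Weyl group actions $W$ on $\Pic(X)$ of type $E_7$ and $E_6$ respectively, and $W$ fixes $-K_X$ and hence preserves $\deg$. Using this I would enumerate the effective classes of anticanonical degree $1$; they form finitely many $W$-orbits, namely the exceptional divisors together with the strict transforms of the linear subspaces spanned by subsets of the marked points. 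These classes, and their global sections, are the proposed generators, and part of this step is to check the class-level statement that every effective class of degree $\ge 1$ admits a degree-$1$ summand lying in the effective monoid.

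Two reductions make the induction manageable. First, $W$ acts on $\Cox(X)$ by graded ring automorphisms permuting the degree-$1$ graded pieces among themselves, so $R$ is $W$-stable; to prove $H^0(X, D) \subseteq R$ for every effective $D$ it therefore suffices to treat one representative of each $W$-orbit, which I would take inside the movable cone $\Mov(X)$, i.e. nef on a suitable small $\Q$-factorial modification $X'$ (on which $H^0$ and all multiplication maps agree with those on $X$, since the modification is an isomorphism in codimension one). Second, it is enough to show: for every effective $D$ with $\deg D = d \ge 2$ lying in $\Mov(X)$, there is a degree-$1$ class $A$, nef on $X'$, with $D - A$ effective and with the multiplication map
\[
H^0(X, A) \otimes H^0(X, D-A) \longrightarrow H^0(X, D)
\]
surjective. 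Granting this, the inductive hypotheses $H^0(X, A) \subseteq R$ and $H^0(X, D-A) \subseteq R$ give $H^0(X, D) \subseteq R$, completing the induction and hence the theorem.

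For the surjectivity I would work on $X'$ and invoke Mumford's regularity criterion: the map is surjective once $A$ is globally generated on $X'$ (possible since on a Mori dream space nef divisors are semiample) and $D - A$ is $A$-regular, that is $H^i\big(X', (D - A) - iA\big) = 0$ for all $i \ge 1$. Since the modification $X'$ is again of Fano type, I may choose a klt boundary $\Delta'$ with $-(K_{X'}+\Delta')$ ample; the required vanishings then follow from Kawamata--Viehweg vanishing applied to the identity
\[
(D-A) - iA - K_{X'} = \big((D - A) - iA + \Delta'\big) + \big(-(K_{X'} + \Delta')\big),
\]
because the second summand is ample and the first is nef on $X'$ for the chamber representative $D$ and the chosen $A$, so that the total class is ample, in particular nef and big. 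Only finitely many $i$ (up to $\dim X$) contribute, so this is a finite list of conditions.

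The hard part is exactly establishing the vanishing after repeatedly subtracting the generator $A$: the binding constraints are the higher Koszul terms $(D-A)-iA$, and ensuring that $(D-A)-iA+\Delta'$ is nef on the relevant small modification is delicate because subtracting multiples of $A$ pushes these classes toward the boundary of the effective cone, where the chamber containing both $D$ and $A$ must be located. This forces a chamber-by-chamber analysis of $\Mov(X)$ together with by-hand verification of the finitely many low-degree base cases in which the remainder fails to be big; the tractability of these cases rests on the explicit, comparatively small $E_7$ and $E_6$ root-system combinatorics and on the dimension counts $\dim H^0(X,D)$, which is precisely why the method reaches $X_{2,3,4}$ and $X_{3,2,3}$ but not the larger members of the list.
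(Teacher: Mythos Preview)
Your strategy has two concrete gaps. First, Mumford's regularity criterion needs the twisting bundle $A$ to be globally generated, and ``nef on a Mori dream space implies semiample'' does not give this: semiample only means some multiple is base-point-free. On $X_{2,3,4}=\Bl_7\P^3$ the natural degree-$1$ nef candidate $L=2H_1-\sum_{i=1}^{7}E_i=-\tfrac{1}{2}K_X$ corresponds to the net of quadrics through seven general points of $\P^3$; this net has an eighth base point, so $|L|$ is not base-point-free on $X$, and you have not shown that any small $\Q$-factorial modification carries a degree-$1$ class that is genuinely globally generated. The sentence ``$A$ is globally generated on $X'$ (possible since on a Mori dream space nef divisors are semiample)'' is therefore an unjustified leap. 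Second, the Kawamata--Viehweg step cannot start the induction: for $\deg D=2$ and $\dim X\in\{3,4\}$ the class $D-(i+1)A$ already has negative anticanonical degree for $i\ge 2$, hence is not effective, let alone nef and big, on any model. You note this yourself and defer to a ``chamber-by-chamber analysis together with by-hand verification of the finitely many low-degree base cases,'' but nothing is verified; those base cases are precisely where the content of the theorem sits, so as written this is a plan rather than a proof.

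The paper avoids both problems by following the Batyrev--Popov template for del Pezzo surfaces rather than a cohomological regularity argument. One passes to a model on which a chosen $(-1)$-divisor $E_0$ is an exceptional $\P^{n}$ and uses the exact sequence $0\to H^0(D-E_0)\to H^0(D)\to H^0(E_0,D|_{E_0})$; a preparatory lemma, proved by induction on the number of blown-up points and using only the \emph{effectivity} of $L$ (never base-point-freeness), produces a section $s\in H^0(D-mL)$ with $m=(D,E_0)$ that is nowhere zero along $E_0$. Multiplication by $s$ then identifies the image in $H^0(E_0,D|_{E_0})\cong H^0(\P^{n},\mathcal O(m))=\operatorname{Sym}^m H^0(\P^{n},\mathcal O(1))$ with something visibly generated in degree $1$. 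No vanishing theorem and no chamber decomposition is required; the only inputs are the elementary geometry of $E_0\cong\P^{n}$ and the known generation of $\Cox(Y)$ for $Y=X_{2,2,4}$ or $X_{3,1,3}$.
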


In fact, the Cox rings of varieties from (1), (2), (3) are also generated by global sections of effective divisors of anticanonical degree $1$. We expect that the same is true for the remaining cases $X_{2,3,5}$, $X_{3,2,4}$, and $X_{3,2,5}$. 
For a computational approach, see Remark \ref{othercases}.
We also discuss the relations between the generators of $\Cox(X_{a,b,c})$ in Remark \ref{relation}.

It would be an interesting problem to determine which blown-up spaces of $(\P^n)^k$ at general linear subspaces are Mori dream spaces or even varieties of Fano type. In \cite{DPU}, the authors show that the blow-up of $\P^3$ at six general lines is a weak Fano variety.

\subsection*{Notations}
Throughout the paper, we work over the field $\C$ of complex numbers, and we use the following notations and assumptions.
Let $a,b,c$ be integers such that $a,c \geq 2, b \geq 1$ and if $c=2$ then $a>2$, and let
$X_{a,b,c}:=\Bl_{b+c}(\P^{c-1})^{a-1}$ be the blow-up of $(\P^{c-1})^{a-1}$ at $b+c$ general points of $(\P^{c-1})^{a-1}$.
In the rest of the paper, we always assume that 
$$
\frac{1}{a} + \frac{1}{b} + \frac{1}{c} > 1
$$ 
so that $X_{a,b,c}$ is a Mori dream space by \cite[Theorem 1.3]{CT}.
Note that $\Pic(X_{a,b,c})$ is freely generated by $H_1, \ldots, H_{a-1}$ and $E_1, \ldots, E_{b+c}$ where $H_i$ is the pull-back of the hyperplane from the $i$-th factor of $(\P^{c-1})^{a-1}$ and $E_j$ is an exceptional divisor of the blow-up.
Following Mukai \cite{M2}, we define a symmetric bilinear form on $\Pic(X_{a,b,c})$ as follows: $(H_i, E_j) = 0,~~(H_i, H_j)=(c-1)-\delta_{i,j},~~(E_i,E_j)=-\delta_{i,j}$.
The \emph{anticanonical degree of a divisor} $D \in \Pic(X_{a,b,c})$ is the integer
$\deg(D) := \frac{1}{ac-a-c}(D, -K_{X_{a,b,c}})$.

\subsection*{Organization}
The organization of this paper is as follows. Sections \ref{movsec} and \ref{lfsec} are devoted to proving Theorem \ref{main}. In Section \ref{coxsec}, we show Theorem \ref{cox}.

\subsection*{Acknowledgements}
The authors would like to thank Elisa Postinghel for useful comments.
The authors also wish to thank the referee for helpful suggestions.

\section{Movability of anticanonical divisor $-K_{X_{a,b,c}}$}\label{movsec}

In this section, we prove Theorem \ref{main} (1). First, we consider the Weyl group action on $\Pic(X_{a,b,c})$ following \cite{M2}. The divisor classes of $E_1-E_2, \ldots, E_{b+c}-E_{b+c-1}$, $H_1-E_1 - \cdots - E_c$, $H_1-H_2, \ldots, H_{a-2} - H_{a-1}$ form a system of simple roots of a finite root system with a Dynkin diagram $T_{a,b,c}$ the orthogonal complement of $-K_{X_{a,b,c}}$ in $\Pic(X_{a,b,c})$ where $T_{a,b,c}$ is the T-shaped tree with legs of length $a,b,c$ with $a+b+c-2$ vertices.
Then the Weyl group $W_{a,b,c}$ of $T_{a,b,c}$ naturally acts on $\Pic(X_{a,b,c})$ by fixing $-K_{a,b,c}$.
The following lemma and Theorem \ref{main} (1) might be known to experts, but we include the whole proof for reader's convenience.

\begin{lemma}\label{movfix}
The action of $W_{a,b,c}$ fixes the movable cone $\Mov(X_{a,b,c})$.
\end{lemma}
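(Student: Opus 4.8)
The plan is to combine the chamber structure of the movable cone of a Mori dream space with the invariance of Mukai's bilinear form under $W_{a,b,c}$. Since $X_{a,b,c}$ is a Mori dream space, the closed movable cone decomposes as
$$
\overline{\Mov}(X_{a,b,c}) = \bigcup_{X'} \Nef(X'),
$$
where $X'$ ranges over the finitely many small $\Q$-factorial modifications of $X_{a,b,c}$, each $\Pic(X')$ being identified with $\Pic(X_{a,b,c})$; this is the Hu--Keel chamber decomposition. Every simple reflection $s_\alpha$ attached to a simple root $\alpha \in \{E_i - E_{i+1},\ H_i - H_{i+1},\ \alpha_0\}$, with $\alpha_0 = H_1 - E_1 - \cdots - E_c$, is by construction an orthogonal reflection for Mukai's form and fixes $-K_{X_{a,b,c}}$. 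Because $W_{a,b,c}$ is generated by these simple reflections, it suffices to show that each $s_\alpha$ maps the finite set of chambers $\{\Nef(X')\}$ to itself; then $s_\alpha$ preserves their union $\overline{\Mov}(X_{a,b,c})$, and hence the movable cone $\Mov(X_{a,b,c})$.

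First I would treat the model case, which isolates the essential mechanism. When $X_{a,b,c}$ is a del Pezzo surface there are no small modifications, so $\overline{\Mov}$ is the single chamber $\Nef$, cut out by the conditions $(D, C) \geq 0$ as $C$ runs over the $(-1)$-curves. The classes of the $(-1)$-curves are characterized intrinsically by $(C, C) = -1$ and $(C, -K_{X_{a,b,c}}) = 1$; since $W_{a,b,c}$ preserves the form and fixes $-K_{X_{a,b,c}}$, it permutes this finite set, and therefore preserves the dual cone $\Nef$. The same principle is what I would globalize: the walls separating the chambers $\Nef(X')$, and the facets bounding $\overline{\Mov}(X_{a,b,c})$ inside the pseudo-effective cone, are supported on hyperplanes orthogonal to a finite collection of distinguished classes (the contractible, respectively flopping, divisor classes), and the goal is to show that this collection is stable under $W_{a,b,c}$.

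The main obstacle is precisely this last point in higher dimension: identifying, in a manifestly $W_{a,b,c}$-equivariant way, the divisor classes that govern the chamber structure of $\overline{\Mov}(X_{a,b,c})$. Here I would invoke Mukai's analysis of these blow-ups, in which the small $\Q$-factorial modifications of $X_{a,b,c}$ are organized by the combinatorics of the root system $T_{a,b,c}$ (compatibly with his result that $X_{a,b,c}$ and $X_{c,b,a}$ are small $\Q$-factorial modifications of one another), so that the chambers $\Nef(X')$ correspond to Weyl-chamber data on which $W_{a,b,c}$ acts by permutation. Granting this identification, each simple reflection permutes the chambers, and the $W_{a,b,c}$-invariance of the form guarantees that the permuted cones are again the nef cones of the same small modifications. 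Assembling the three cases, every simple reflection fixes $\overline{\Mov}(X_{a,b,c})$, and hence the whole group $W_{a,b,c}$ fixes $\Mov(X_{a,b,c})$, as claimed.
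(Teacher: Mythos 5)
Your overall strategy is legitimate and genuinely different from the paper's, but as written it has a gap exactly at the step you yourself flag as ``the main obstacle.'' The lemma to be proved is that the purely lattice-theoretic action of $W_{a,b,c}$ (isometries of Mukai's form fixing $-K$) respects a geometrically defined cone, and your higher-dimensional argument reduces this to the claim that $W_{a,b,c}$ permutes the finite set of chambers $\{\Nef(X')\}$ in the Hu--Keel decomposition of $\overline{\Mov}(X_{a,b,c})$. That claim is essentially equivalent to the lemma itself, and you do not establish it: the phrase ``granting this identification'' defers the entire content to an unspecified ``analysis of Mukai.'' A priori there is no reason an isometry of $\Pic$ fixing $-K$ should send nef cones of small modifications to nef cones of small modifications; what makes it true is that each simple reflection is induced by an actual birational self-map, namely the Cremona involution centered at $c$ of the blown-up points, whose induced map on the blow-up is an isomorphism in codimension one for general points (\cite[Lemma VI.4.2]{DO}). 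Once you have that, the chamber machinery is unnecessary: a pseudo-isomorphism takes strict transforms of movable divisors to movable divisors, so each generator of $W_{a,b,c}$ preserves $\Mov(X_{a,b,c})$ directly. This is precisely the geometric argument the paper records in the remark following the lemma.

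The paper's actual proof avoids the geometric realization at this point and is more combinatorial: by \cite[Proposition 7.2]{BH}, $\Mov(X)=\bigcap_i \Cone(G_1,\dots,\widehat{G_i},\dots,G_r)$ where the $G_i$ generate the semigroup $\Eff(X)\cap\Pic(X)$, and by \cite[Lemma 2.3]{CT} the group $W$ permutes the set $\{G_i\}$; hence it permutes the cones being intersected and fixes their intersection. (Of course the cited lemma of Castravet--Tevelev ultimately rests on the same geometric input, but the point is that the proof cites a precise statement rather than granting one.) To repair your write-up, replace ``granting this identification'' with a citation of the codimension-two indeterminacy of the Cremona involutions, or with \cite[Lemma 2.3]{CT}; your del Pezzo warm-up, by contrast, is complete as stated, since there the classes with $(C,C)=-1$ and $(C,-K)=1$ are all classes of actual $(-1)$-curves and the nef cone is their dual.
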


\begin{proof}
Put $W:=W_{a,b,c}$ and $X:=X_{a,b,c}$. Consider generators $G_1, \ldots, G_r$ of the semigroup $\Eff(X) \cap \Pic(X)$.
It follows from \cite[Proposition 7.2]{BH} that
$$
\text{Mov}(X)= \bigcap_{1 \leq i \leq r} \text{Cone}(G_1, \ldots, \widehat{G_i} , \ldots, G_r).
$$
By \cite[Lemma 2.3]{CT}, $W$ permutes $\{ G_i \mid 1 \leq i \leq r\}$ so that it also permutes $\{ \text{Cone}(G_1, \ldots, \widehat{G_i}, \ldots, G_r) \mid 1 \leq i \leq r \}$. Thus $W$ fixes $\text{Mov}(X)$.
\end{proof}

\begin{remark}
Lemma~\ref{movfix} can also be viewed more geometrically.  The action of \(W_{a,b,c}\) on \(\Mov(X_{a,b,c})\) is induced by taking strict transforms of divisors under the action of the Cremona involution centered at subsets of the blown-up points.  The induced rational map on the blow-up \(X_{a,b,c}\) has indeterminacy of codimension \(2\) \cite[Lemma VI.4.2]{DO}, and so the strict transform of a movable divisor class is movable.
\end{remark}

\begin{proof}[Proof of Theorem \ref{main} (1)]
We use the same notation in Proof of Lemma \ref{movfix}.
Note that $W$ fixes $G_1 + \cdots + G_r$. Since every $W$-fixed element in $\Pic(X)$ is some multiple of $-K_X$, it follows that
$$
G_1 + \cdots + G_r= -\ell K_X ~~\text{ for some integer $\ell >0$}.
$$
Thus $-K_X$ is big.
Using Lemma \ref{movfix}, one can similarly show the movability of $-K_X$: the sum of the generators of \(\text{Mov}(X) \cap \Pic(X)\) is fixed by the action of $W$, and thus must be a positive multiple of the anticanonical class, which is consequently movable.
\end{proof}

\section{Log Fano structure of $X_{a,b,c}$}\label{lfsec}

In this section, we first prove Theorem \ref{main} (2), and then explain a way to choose a boundary divisor $\Delta_{a,b,c}$ on $X_{a,b,c}$ such that $(X_{a,b,c}, \Delta_{a,b,c})$ is a log Fano pair in Remark \ref{lfs}.

\begin{proof}[Proof of Theorem \ref{main} (2)]
Put $X:=X_{a,b,c}$. Since $\Cox(X)$ has the natural $\Z$-grading by the anticanonical degree, we can consider the projective variety $Z=\Proj \Cox(X)$.
Then $\Pic(Z)$ is generated by $\mathcal{O}_Z(1)$, and $Z$ is a normal projective Gorenstein variety with rational singularities such that $-K_Z$ is ample (\cite[Theorem 1.3]{CT}). Thus $Z$ is a Fano variety with canonical singularities so that $\Cox(Z)$ has log terminal singularities by \cite[Theorem 1.1]{GOST}. Since $\Cox(X)$ and $\Cox(Z)$ are unique factorization domains (\cite[Corollary 1.2]{EKW}), they are integrally closed. \cite[Claim 3.7]{CT} says that $\Cox(X)$ is integral over the homogeneous coordinate ring $R$ of some embedding $Z \subset \P^N$ whose hyperplane section is $\mathcal{O}_Z(1)$. Note that $\Cox(Z)$ is a section ring of $\mathcal{O}_Z(1)$ so that $\Cox(Z)$ is also integral over $R$. Theorefore, $\Cox(X)=\Cox(Z)$. By applying \cite[Theorem 1.1]{GOST} to $\Cox(X)$, we conclude that $X$ is of Fano type.
\end{proof}

\begin{remark}\label{lfs}
Here we explain a standard way to choose a boundary divisor $\Delta$ on a $\Q$-factorial variety $X$ of Fano type under the assumption that $-K_X$ is big and movable. 
Since $X$ is a Mori dream space by \cite[Corollary 1.3.2]{BCHM}, we can run the $-K_X$-minimal model program and obtain a model $f \colon X \dashrightarrow X'$ such that $X'$ is also a Mori dream space and $-K_{X'}$ is semiample and big (see \cite[Proposition 1.11]{HK}). By Bertini's theorem, there is an irreducible and reduced divisor $H' \in |-mK_{X'}|$ for a sufficiently large integer $m>0$. Then $(X', \frac{1}{m}H')$ is a klt Calabi-Yau pair. A log pair $(X, \Delta)$ is called a \emph{klt Calabi-Yau pair} if it is a klt pair and $K_X+\Delta \sim_{\Q} 0$.
Now $H:=f_{*}^{-1}H'$ is also an irreducible and reduced divisor, and $\frac{1}{m}H \sim -K_X$. Then $(X, \frac{1}{m}H)$ is also a klt Calabi-Yau pair. 
Note that there is a canonical way to identify $|-mK_{X'}|$ with $|-mK_X|$ for every integer $m >0$. Thus we can simply choose a general irreducible and reduced element $H \in |-mK_X|$ for a sufficiently large integer $m>0$.
Next, take an ample $\Q$-divisor $A$ on $X$ such that $-K_X - A$ is linearly equivalent to an effective $\Q$-divisor $B$. For a sufficiently small rational number $\epsilon >0$, let $\Delta:= (1-\epsilon)\frac{1}{m}H + \epsilon B$. Then $(X, \Delta)$ is clearly a klt pair and $-(K_X+\Delta) \sim_{\Q} \epsilon A$ is ample, i.e., $(X, \Delta)$ is a log Fano pair.
\end{remark}

\section{Generators of Cox ring of $X_{a,b,c}$}\label{coxsec}

In this section, we determine generators of $\Cox(X_{2,3,4})$ and $\Cox(X_{3,2,3})$, and explain a computational approach to compute generators of $\Cox(X_{a,b,c})$ in the remaining cases.
An effective divisor $E$ on $X_{a,b,c}$ is called a \emph{$(-1)$-divisor} in the sense of Mukai if $(E,E)=-1$ and $\deg(E) = \frac{1}{ac-a-c}(E, -K_{X_{a,b,c}})=1$.
According to Mukai~\cite{M2}, a divisor $E$ is a \((-1)\)-divisor if and only if there exists a small \(\Q\)-factorial modification $X'$ of \(X_{a,b,c}\) on which the strict transform of \(E\) is the exceptional divisor of some contraction. We can assume that $X'$ is also a blow-up of $(\P^{c-1})^{a-1}$ in $b+c$ general points (see e.g., \cite[Lemma 2.2]{CT}).

\begin{lemma}\label{novan}
Assume that $X = X_{2,3,4}$ or $X_{3,2,3}$.
If $D$ is a divisor on $X$ such that $(D.E) \geq 0$ for every $(-1)$-divisor $E$ and $(D.E_0)=0$ for some $(-1)$-divisor $E_0$, then there is a global section of $H^0(X, D)$ which vanishes nowhere on $E_0$.
\end{lemma}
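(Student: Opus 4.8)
The natural plan is to use Mukai's characterization of $(-1)$-divisors, recalled just above the statement, to reduce to the case in which $E_0$ is literally an exceptional divisor of the blow-up, and then to exhibit the desired section as a pullback. Since $E_0$ is a $(-1)$-divisor, there is a small $\Q$-factorial modification $X \dashrightarrow X'$ with $X' = \Bl_{b+c}(\P^{c-1})^{a-1}$ (again a blow-up at $b+c$ general points) on which the strict transform of $E_0$ is one of the exceptional divisors, say $E_{b+c}$. This modification is an isomorphism in codimension one, so it identifies $\Cox(X)$ with $\Cox(X')$, hence $H^0(X,D)$ with $H^0(X',D')$ where $D'$ is the transform of $D$; it preserves Mukai's bilinear form and the set of $(-1)$-divisors, so $(D'.E)\ge 0$ for all $(-1)$-divisors $E$ on $X'$ and $(D'.E_{b+c}) = (D.E_0) = 0$. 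It therefore suffices to produce a section of $D'$ that is nowhere zero on $E_{b+c}$.

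Reading $(D'.E_{b+c}) = 0$ through Mukai's form shows that the coefficient of $E_{b+c}$ in $D'$ vanishes, so $D'$ is a pullback: $D' = \rho^*\bar D$, where $\rho\colon X' \to X'' := \Bl_{b+c-1}(\P^{c-1})^{a-1}$ contracts $E_{b+c}\cong \P^{\dim X - 1}$ to the general point $p_{b+c}$, and $\bar D$ is the induced class on $X''$. By the projection formula $H^0(X',D') = H^0(X'',\bar D)$, and for $\bar s \in H^0(X'',\bar D)$ the section $\rho^*\bar s$ restricts on $E_{b+c}$ to the constant $\bar s(p_{b+c})$, since $D'|_{E_{b+c}}$ is trivial. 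Hence $\rho^*\bar s$ vanishes nowhere on $E_{b+c}$ precisely when $\bar s(p_{b+c})\ne 0$, i.e. when $p_{b+c}\notin \Bs|\bar D|$. As $p_{b+c}$ is general — in particular general relative to $p_1,\dots,p_{b+c-1}$ and to the fixed class $\bar D$ — it avoids the proper closed subset $\Bs|\bar D|$ as soon as $\bar D$ is effective, which completes the construction.

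The step I expect to carry the real weight is establishing that $D$ (equivalently $\bar D$) is effective, since otherwise $\Bs|\bar D| = X''$ and there is nothing to avoid. This is where the global numerical hypothesis must be spent: being nef against every $(-1)$-divisor ought to place the class of $D$ in the movable cone $\Mov(X)$ — for these varieties the $(-1)$-divisors are exactly what cut $\Mov(X)$ out, through the same Weyl-group combinatorics underlying Lemma \ref{movfix} — and since $X$ is a Mori dream space, $\Mov(X) \subseteq \Eff(X)$, every integral class of which is represented by an effective divisor. Pinning down this implication, matching the purely numerical condition to membership in $\Eff(X)$, is the delicate point; once it is in place, the pullback together with the general position of $p_{b+c}$ finishes the proof with no cohomology vanishing required, and the conclusion transfers back to $X$ because the entire statement concerns the shared Cox ring and its $(-1)$-divisor $E_0$.
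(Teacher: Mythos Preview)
Your reduction step is exactly the paper's: pass to a small $\Q$-factorial modification on which $E_0$ is an honest exceptional divisor, observe that $(D,E_0)=0$ forces $D$ to be pulled back from the blow-down $Y$, and then use that the contracted point is general to produce a section of $\bar D$ missing that point. The paper does precisely this and invokes the same informal ``general point'' argument.

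The gap is the step you yourself flag: showing $\bar D$ is effective. You propose that $(D,E)\ge 0$ for all $(-1)$-divisors should force $D\in\Mov(X)$ via ``the same Weyl-group combinatorics underlying Lemma~\ref{movfix}'', but that lemma only says $W_{a,b,c}$ preserves $\Mov(X)$; it gives no description of $\Mov(X)$ by Mukai-pairing inequalities, and you supply no other argument or reference. (You also only need $\bar D\in\Eff(X'')$, which is weaker than what you state.) This is exactly the content the paper supplies and you omit. The paper does \emph{not} appeal to a structural description of $\Mov$ or $\Eff$; instead it proves effectiveness by induction on the number $k$ of blown-up points, using the divisor $L=-\tfrac{1}{2}K_{X'}$ (resp.\ $-\tfrac13 K_{X'}$), which satisfies $(L,E)=1$ for every $(-1)$-divisor. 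If $(\bar D,E')=0$ for some $(-1)$-divisor on $Y$, the inductive hypothesis applies directly; otherwise one subtracts $mL$ with $m=\min_E(\bar D,E)$ to land in the previous case, so $\bar D-mL$ is effective and hence so is $\bar D=(\bar D-mL)+mL$. Either supply this induction, or give a precise reference (e.g.\ to Mukai's chamber description) establishing that the Mukai-pairing inequalities against $(-1)$-divisors force effectiveness on $\Bl_{k}\P^3$ for $k\le 7$ and on $\Bl_k(\P^2)^2$ for $k\le 5$; as written the argument is incomplete at its one nontrivial point.
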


\begin{proof}
We proceed with the induction on the number of blown-up points. First, we consider the case $X = X_{2,3,4} = \Bl_7 \P^3$, which can be contracted to \(\Bl_k(\P^3)\). 
For $1 \leq k \leq 7$, let $X':=\Bl_k \P^3$. The assertion for the base case $k=1$ is clear.
Consider the divisor $L:=-\frac{1}{2}K_{X'} = 2H_1-\sum_{i=1}^k {E_i}$ which satisfies $(L, E)=1$ for all $(-1)$-divisors $E$. Note that $H^0(X', L) \neq 0$.
Since $(D,E_0)=0$, we see that $D|_{E_0}=0$. Thus it is sufficient to find a global section of $H^0(X'', D')$ which vanishes nowhere on $E_0'$ where $X''$ is a small $\Q$-factorial modification of $X'$ and $D', E_0'$ are strict transforms of $D, E_0$.

After passing to a small $\Q$-factorial modification, we may assume that the blow-down $f \colon X' \to Y$ contracts $E_0$, and that on this model, the strict transform of \(E_0\) is isomorphic to \(\P^{2}\). Recall that $E_1, \ldots, E_k$ are exceptional divisors of the blow-up $X' \to \P^3$. For simplicity, we put $E_1=E_0$.
If \(D \sim d H_1 - \sum_{i=1}^k m_i E_i\), then by the definition of the Mukai pairing, we have \(m_1=(D,E_0)\). In particular, since \((D,E_0) = 0\), the divisor \(D\) is the pullback of a divisor on \(Y\).

By assumption, there is a divisor $D'$ on $Y$ with $f^*D' = D$. Then $(D'.E') \geq 0$ for every $(-1)$-divisor $E'$ on $Y$. If there is a $(-1)$-divisor $E'_0$ on $Y$ such that $(D', E'_0)=0$, then $D'$ is effective by the induction. Since we blow up a general point, the conclusion immediately follows.
Now we consider the case that $(D'.E')>0$ for every $(-1)$-divisor $E'$ on $Y$.
Take $m$ to be the minimum of numbers $(D', E')$ for all $(-1)$-divisors $E'$ on $Y$. Let $E'_0$ satisfy $(D', E'_0)=m$. By abuse of notation, we let $L=-\frac{1}{2}K_{Y}$.
By the induction, $D'':=D'-mL$ has a nontrivial global section, and so there is a nonzero section  $D'=D'' +mL$. By taking pull-back of this global section, we get a global section of $D=f^*D'$ which does not vanish at any point of $E_0$.
The same argument works for the case $X = X_{3,2,3}$ using the divisor $L:=-\frac{1}{3}K_X=H_1 + H_2 - \sum_{i=1}^{5}{E_i}$. 
\end{proof}

\begin{proof}[Proof of Theorem \ref{cox}]
We closely follow \cite[Proof of Theorem 3.2]{BP}.
Let $X = X_{2,3,4}$ or $X_{3,2,3}$, and let $Y = X_{2,2,4}$ or $X_{3,1,3}$ with a blow-up $f \colon X \to Y$.
As we saw in the introduction, the assertion holds for $Y$.
It suffices to show that the following claim: For an effective divisor $D$ on $X$ with $\deg(D) \geq 2$, the vector space $H^0(X, D)$ is spanned by global sections of anticanonical degree one divisors. 
Now we proceed an induction on $\deg (D)$.

Case 1. $(D, E) < 0$ for some $(-1)$-divisor $E$. Then $H^0(X, D-E)=H^0(X, D)$. Since $\deg (D-E) = \deg (D)-1$, the claim for this case follows from the induction.

Case 2. $(D, E)=0$ for some $(-1)$-divisor $E$. As in the proof of Lemma~\ref{novan}, by considering a small $\Q$-factorial modification, we may assume that the blow-down $f \colon X \to Y$ contracts $E$. Then $D=f^*D'$ for some effective divisor $D'$ on $Y$. Then $H^0(X,D) = H^0(Y,D')$ and the claim holds in this case.

Case 3. $(D, E)>0$ for every $(-1)$-divisor $E$. Let $m$ be the minimum of numbers $(D, E)$ for all $(-1)$-divisors $E$, and let $E_0$ be a $(-1)$-divisor such that $(D, E_0)=m$. Put $D' := D-E_0$.
By considering a small $\Q$-factorial modification, we may assume that the blow-down $f \colon X \to Y$ contracts $E_0$. In particular, $E_0 \simeq \P^{n}$ where $n:=\dim X-1$.
Consider an exact sequence
$$
0 \to H^0(X, D') \to H^0(X, D) \to H^0(E_0, D|_{E_0}) \to \cdots.
$$
By the induction, $H^0(X, D')$ is spanned by global sections of anticanonical degree one divisors. Thus we only have to show that there exist global sections of $H^0(X, D)$ which are generated by global sections of anticanonical degree one divisors and generate $H^0(E_0, D|_{E_0})$.
Since $(D,E_0)=m$, we can naturally identify $H^0(E_0, D|_{E_0})$ with $H^0(E_0, mL|_{E_0}) = H^0(\P^{n}, \mathcal{O}_{\P^n}(m))$ where $L = 2H_1-\sum_{i=1}^{7}{E_i}$ or $H_1 + H_2 - \sum_{i=1}^{5}{E_i}$ as in Proof of Lemma \ref{novan}.
Since $(D-mL, E) \geq 0$ for every $(-1)$-divisor $E$ and $(D-mL, E_0)=0$, it follows from Lemma \ref{novan} that there is a global section $s \in H^0(X, D-mL)$ such that $s$ nowhere vanishes on $E_0$. We now consider the following commutative diagram
\[
\xymatrix{
	H^0(X, mL) \ar[r]^{\cdot s} \ar[d] & H^0(X, D) \ar[d] \\
	H^0(E_0, mL|_{E_0}) \ar[r] &  H^0(E_0, D|_{E_0}).
	}
\]
The bottom map of the above diagram is an isomorphism because $s$ nowhere vanishes on $E_0$.
It remains to show that there exist global sections of $H^0(X, mL)$ generated by global sections of anticanonical degree one divisors such that their restrictions to $E_0$ generate $H^0(E_0, mL|_{E_0})$. Note that this statement is trivial when $m=1$ since $L$ is already a degree one divisor. In particular, one can say that $H^0(E_0, L|_{E_0})=H^0(E_0, D|_{E_0})=H^0(\P^n, \mathcal{O}_{\P^n}(1))$ is generated by global sections of anticanonical degree one divisors.
Then the same is true for the space $H^0(E_0, mD|_{E_0}) = H^0(\P^n, \mathcal{O}_{\P^n}(m))=\text{Sym}^m H^0(\P^n, \mathcal{O}_{\P^n}(1))$. Thus we complete the proof.
\end{proof}

\begin{remark}\label{othercases}
We do not determine generators of the Cox ring in the three cases $X=X_{2,3,5}$, $X_{3,2,4}$, and $X_{3,2,5}$. However, we expect that in all these cases the Cox ring is generated by divisors of anticanonical degree \(1\). This could in principle be verified by a finite computation, which we outline below; however, this seems to be impractical without additional optimizations.

The first step is to compute the set of divisors on \(X\) with anticanonical degree \(1\).  Every $(-1)$-divisor is an effective divisor of anticanonical degree $1$. According to Mukai~\cite{M2}, these divisors are precisely the orbit of one of the exceptional divisors \(E_0\) under the Weyl group. On $X_{2,3,5} = \Bl_8 \P^4$, there are $2160$ such divisors, fitting into $15$ distinct classes up to the action of $S_8$.  Since the $(-1)$-divisors generate \(\Eff(X_{a,b,c})\), any effective divisor of anticanonical degree \(1\) is of the form \(\sum_i a_i E_i\), where \(E_i\) ranges over the \((-1)\)-divisors and the \(a_i\) are non-negative rational numbers.  The anticanonical degree of \(\sum_i a_i E_i\)  is \(\sum_i a_i\), and so every divisor of anticanonical degree \(1\) has coefficients in the compact set \(0 \leq a_i \leq 1\).  In particular, there are only finitely many integral effective divisors \(F_j\) of anticanonical degree \(1\).  For each of these, compute \(H^0(X,F_j)\).

Given any divisor $D$, one can compute the number of distinct representations of $D$ as a linear combination of anticanonical degree one divisors \(F_j\), and so compute the dimension of the image \(\bigoplus_j H^0(X,\mathcal O_X(F_j)) \to H^0(X,D)\).  For a given divisor \(D\), one can also compute \(H^0(X,D)\) directly, and thereby check whether this map is surjective.

Now we argue that it is necessary to check this computation only for effective divisors with anticanonical degree less than some bound.
Recall that $Z_{a,b,c}=\Proj \Cox(X_{a,b,c})$ is a Fano variety with canonical singularities by \cite[Theorem 1.3]{CT}. By Kodaira vanishing theorem, we get the Castelnuovo-Mumford regularity as $\reg \mathcal{O}_{Z_{a,b,c}}=\reg \Cox(X_{a,b,c})=b(a-1)(c-1)-1$. Then the maximal anticanonical degree $m_{a,b,c}$ of minimal generators of $\Cox(X_{a,b,c})$ is at most $b(a-1)(c-1)$. By further considering Green's duality and $K_{p,1}$-theorem (\cite[Theorems 2.c.6 and 3.c.1]{G}), we see that $m_{a,b,c} \leq b(a-1)(c-1)-2$. 
For $X_{2,3,5} = \Bl_8 \P^4$, we have $m_{2,3,5} \leq 10$. If we find that all the sections arise as combinations of anticanonical degree \(1\) divisors for all \(D\) with anticanonical degree $\leq  b(a-1)(c-1)-2$, then our expectation would immediately follow.
\end{remark}

\begin{remark}
Elisa Postinghel asks the geometric description of the generators of Cox rings of $X_{a,b,c}$. On $X_{2,3,4}=\Bl_7 \P^3$, the $(-1)$-divisor $2H_1 - 2E_1 - E_2-E_3-E_4-E_5-E_6$ can be interpreted as the strict transform of the pointed cone with vertex the first blown-up point over the twisted cubic curve through the first six blown-up points. But we do not know such a geometric description in general.
\end{remark}

\begin{remark}\label{relation}
Recall that $\reg \Cox(X_{a,b,c})=b(a-1)(c-1)-1$. By considering again Green's duality and $K_{p,1}$-theorem, we see that the maximal degree of minimal generators of the ideal $I_{a,b,c}$ of relations of generators of $\Cox(X_{a,b,c})$ is $\leq b(a-1)(c-1)-2$. But we expect that the ideal $I_{a,b,c}$ is generated by quadrics as in the del Pezzo surface cases (cf. \cite{BP}, \cite[Theorem 1.1]{TVAV}).
\end{remark}

$ $

\end{document}